\documentclass[onefignum,onetabnum]{siamart220329}
\usepackage{epsfig,amssymb,amsfonts,mathrsfs}
\usepackage{color}
\usepackage{algorithm, algpseudocode}
\usepackage{subfig}
\usepackage{graphicx}

\newcommand{\intl}{\int\limits}
\newcommand{\suml}{\sum\limits}
\newcommand{\R}{\mathbb{R}}

\newcommand{\xb}{\mathbf{x}}
\newcommand{\yb}{\mathbf{y}}
\newcommand{\rb}{\mathbf{r}}
\newcommand{\vb}{\mathbf{v}}
\newcommand{\chib}{\boldsymbol{\chi}}
\newcommand{\phib}{\boldsymbol{\varphi}}
\newcommand{\Ic}{\mathcal{I}}

\newcommand{\Cc}{\mathcal{C}}
\newcommand{\Nbrs}{\mathcal{N}}

\renewcommand{\hat}{\widehat}
\renewcommand{\phi}{\varphi}
\renewcommand{\epsilon}{\varepsilon}
\def \abs#1{{\left|#1\right|}}
\def \Norm#1{{\|#1\|}}
\def \half{\frac{1}{2}}
\def \thrf{\frac{3}{2}}
\def \quart{\frac{1}{4}}

\begin{document}

\title{A hybrid interpolation ACA accelerated method for parabolic boundary integral operators\thanks{This work was created while J.T. enjoyed the hospitality of IIT Madras. }}

\author{Sivaram Ambikasaran\thanks{Wadhwani School of Data Science \& Artificial Intelligence, IIT Madras, Chennai, India \email{sivaambi@alumni.stanford.edu}}  
\and 
Ritesh Khan\thanks{Department of Mathematics, IIT Madras, Chennai, India \email{ma19d008@smail.iitm.ac.in}} 
\and 
Johannes Tausch\thanks{Southern Methodist University, Dallas, TX, USA \email{tausch@smu.edu}} 
\and 
Sihao Wang\thanks{Southern Methodist University, Dallas, TX, USA \email{sihaow@smu.edu}}
}
\maketitle

\begin{abstract}
  We consider piecewise polynomial discontinuous Galerkin discretizations of boundary integral reformulations of the heat equation.  The resulting linear systems are dense and block-lower triangular and hence can be solved by block forward elimination.  For the fast evaluation of the history part, the matrix is subdivided into a family of sub-matrices according to the temporal separation. Separated blocks are approximated by Chebyshev interpolation of the heat kernel in time. For the spatial variable, we propose an adaptive cross approximation (ACA) framework to obtain a data-sparse approximation of the entire matrix. We analyse how the ACA tolerance must be adjusted to the temporal separation and present numerical results for a benchmark problem to confirm the theoretical estimates.
\end{abstract}

\begin{keywords}
  Heat equation,
  Thermal layer potentials, 
  Boundary Element Method,
  Adaptive cross approximation.
\end{keywords}

\begin{AMS}
  65R20, 
  68F55, 
  68U05  
\end{AMS}


\section{Introduction}
The solution of parabolic problems by boundary integral techniques is a well-known alternative to finite element or finite difference methods, which has generated considerable interest in the engineering community \cite{rizzo-shippy71, wrobel-brebbia92, grigoriev-dargush02}.

To fix the main ideas, consider the single layer
potential
\begin{equation*}
  \mathcal{V} q(\xb,t) = \int_0^t\!\! \int_{\Gamma}
  G(\xb-\yb, t-\tau) q(\yb,\tau)\, ds(\yb) d\tau,
\end{equation*}
where $\Gamma$ is the boundary of a domain $\Omega \subset \R^3$,  $q$
is a density on $\Gamma$ and
\begin{equation*}
G(\rb,t) = \frac{1}{(4\pi t)^\thrf}
        \exp\left( - \frac{\abs{\rb}^2}{4 t} \right)
\end{equation*}
is the Green's function.
For any density $q$, the potential $\mathcal{V} q$ solves the heat equation
$\partial u = \Delta u$ in $\Omega$ with homogeneous initial condition. The
solution of the Dirichlet problem on $\Gamma$ can be obtained by
taking the boundary trace of $\mathcal{V} q$. This leads to the boundary 
integral equation
\begin{equation}\label{bie:sl}
  \mathcal{V} q(\xb,t) = f(\xb,t),\quad (\xb,t) \in \Gamma \times I,
\end{equation}
where $f$ is the Dirichlet datum and $I = [0,T]$ is the time interval 
where the solution is sought. The well-posedness of \eqref{bie:sl} hinges 
on the fact that the single layer potential $\mathcal{V}$ is coercive in the anisotropic 
Sobolev space $H^{-\half,-\quart}(\Gamma\times I)$, see
\cite{arnold-noon89,costabel90}. This result is the basis for the
error analysis of Galerkin discretization schemes of
\eqref{bie:sl}. 

Boundary integral formulations of more general boundary value problems can be derived 
from the Green's representation formula. However, we only focus on 
\eqref{bie:sl} in this article for brevity. The extension to treat the thermal double and hypersingular operators is straightforward and not discussed separately.

The time convolution in \eqref{bie:sl} implies that a numerical time stepping 
scheme requires evaluating an integral over the entire history of the previous 
time steps. Thus, the computational complexity scales quadratically in both space 
and time and thus, numerical methods for efficient time convolutions have been researched extensively.

One approach is to exploit the semigroup property of the heat equation. 
Here the layer potentials are evaluated only for a short time interval and then the 
method is restarted by evaluating the heat equation inside the domain. For 
representative papers in that direction, we refer to
\cite{greengard-lin99,li-greengard07,veerapaneni-biros08,greengard-wang19} and the cited references
therein.

An alternative is to evaluate the entire convolution in \eqref{bie:sl} by using fast methods which approximate the heat kernel
by a truncated kernel expansions in both space and time. This leads to low-rank
approximations of blocks in the discretization matrix that are well separated in 
time. In \cite{tausch06}, such an approach was first described in the context of
Nystr\"om discretizations, which was later extended to Galerkin discretizations 
in \cite{messner-etal14, messner-etal15} and space-time parallel implementations
\cite{watschinger-etal22}. 

One issue with methods that rely on degenerate series expansions
is that their formulation and convergence is strongly dependent on the particular
expansion. Here, the different scaling of the space and time variables in 
the heat kernel requires special care, which considerably complicates the algorithm, 
see \cite{messner-etal14,watschinger22}. 

A well-known alternative to kernel expansions is the adaptive cross approximation 
(ACA). This algorithm computes a low-rank approximation of a matrix block from
a sequence of sampled rows and columns. Its effectiveness depends on how well the 
kernel can be approximated by a truncated series, but does not require the knowledge
of a particular expansion. For integral operators that arise from elliptic PDEs,
this is by now a well-established method, with too many relevant
papers to review here. Instead, we refer to the books by Hackbusch and Bebendorf 
\cite{aca, borm-grase-hackb02, bebendorf08}. Because of its flexibility, the ACA has 
also found its use in software packages for general elliptic BEM methods,
see~\cite{betcke-etal15}. The ACA has also been applied recently for convolution 
quadrature discretizations of hyperbolic boundary integral operators, see 
\cite{seibel22, haider-rjasanow-schanz23}. In the context of thermal layer 
potentials, the ACA compression was applied in \cite{watschinger22} to compress the 
temporal near-field, by treating the first few steps in the time convolution as a 
sequence of elliptic operators. However, a space-time ACA method appears to be new.

The goal of this paper is to develop a hybrid approach for time dependent boundary
integral operators. Specifically, we will employ a combination of kernel 
interpolation in the time variable with the ACA compression for the spatial 
variable. This is a continuation of the work that was initiated in the PhD thesis
\cite{wang20}. In the present paper, we will extend the methodology to handle 
higher order discretizations in space and time. In addition, we will provide a 
detailed analysis of how the ACA approximation affects the overall error of the 
numerical solution. This will lead to a strategy to optimally choose the ACA 
accuracy depending on the temporal and spatial level of the block. 
 
\section{Discretization of Thermal Layer Potentials}
This section briefly discusses discontinuous piecewise polynomial ansatz functions in space and time and shows how higher order elements lead to a block structure in the matrix representation of the discrete potential. This viewpoint will facilitate solving the
integral equation formulation by block forward elimination as well as the generation 
of low-rank approximations by the ACA. It is important to remark that all thermal layer 
potentials are conforming to discontinuous temporal elements, but the double, adjoint and 
hypersingular potentials require continuity in space. The latter can be achieved by 
applying the restriction operators discussed in \Cref{sec:contin}.

\subsection*{Temporal Discretization}
The time discretization is based on a subdivision of the time interval $I=[0,t]$ 
into $N_t$ intervals $I_i = [t_{i-1},t_i]$. We assume that this subdivision is
uniform, i.e. $h_t = T/N_t$ and $t_i = ih_t$. Note that non-uniform time 
discretizations are also possible and have been considered in the context of 
parabolic layer potentials in \cite{watschinger-of23}. However, this will
increase the technical level of the presentation and will not be discussed 
here. 

The temporal ansatz space $S_I$ consists of functions that are polynomials of
degree $p_t$ in each
$I_i$. To ensure lower triangular block structure, we do not impose
continuity across intervals. Thus a basis of $S_I$ consists of
functions $\chi_{i1},\dots ,\chi_{iD_t}$ that form a polynomial basis in
$I_i$ and are zero elsewhere. Here $D_t = p_t + 1$ is the number of basis
functions per interval. Below, we will write these functions into a
vector
\begin{equation}\label{def:chib}
  \chib_i(t) =
  \begin{bmatrix}
      \chi_{i 1}(t)\\ \vdots \\\chi_{i D_t}(t)
    \end{bmatrix},\quad  t \in I \;\text{and}\; i=1\dots N_t.
\end{equation}
The dimension of $S_I$ is $D_t N_t$. For instance, in the case of piecewise linear
functions one could set 
\begin{equation*}
  \chib_i(t) =
  \begin{bmatrix}
       (t - t_{i-1})/h_t\\(t _i- t)/h_t 
    \end{bmatrix}.
\end{equation*}

\subsection*{Spatial Discretization}
The surface $\Gamma$ is divided into $N_s$ triangular patches
\begin{equation*}
  \Gamma = \bigcup_{k=1}^{N_s} \Gamma_k, 
\end{equation*}
where $\Gamma_k$ is the image of a parameterization from
the standard triangle into three dimensional space.  The spatial
ansatz space $S_\Gamma$ consists of functions whose pullback from
$\Gamma_k$ is a polynomial of degree $p_s$ in the standard
triangle.  We denote by $h_s$ the maximal diameter of the $\Gamma_k$'s and by
$\phi_{k 1},\ldots,\phi_{k D_s}$, $k = 1,\ldots,N_s$ the usual nodal
shape functions on patch $\Gamma_k$,
which are extended by zero on the remainder of the surface. Thus the
finite element space is
\begin{equation*}
    S_\Gamma = \mbox{span} \Big\{ \phi_{k m}, 1\leq k\leq N_s, 
    1\leq m \leq D_s \Big\}.
\end{equation*}
The nodes on patch $\Gamma_k$ are denoted by $\vb_{k m}$. Moreover,
$\phi_{k m}(\vb_{k' m'}) = \delta_{k,k'} \delta_{m m'}$.  For instance,
in the case of piecewise linear elements, the nodes are the three
vertices of the triangular patches and in the case of piecewise quadratic elements, the nodes are the vertices and the midpoints of the edges. Since the nodes on the edges are shared with the adjacent patches, there are repetitions in the
$\vb_{k m}$'s, and therefore it will be useful to introduce a unique index. Thus, the set of vertices has two representations  
\begin{equation*}
  \mathcal{V} = \{ \vb_{km} \,:\, 1\leq k\leq N_s, 1\leq m \leq D_s \} 
  = \{ \vb_{v} \,:\, 1\leq v\leq N_{v} \} .
\end{equation*}
To elucidate the block structure in discretization matrices, we will write the basis functions corresponding to a patch into a vector
\begin{equation}\label{def:phib}
  \phib_k(\xb) =
  \begin{bmatrix}
      \phi_{k 1}(\xb)\\ \vdots \\\phi_{k D_s}(\xb)
    \end{bmatrix},\quad \xb \in \Gamma \;\text{and}\; k=1,\dots, N_s,
\end{equation}
where $D_s$ denotes the number of functions per patch. 
As in the temporal discretization, the space $S_\Gamma$ is not
continuous across the patches. However, it has basis functions supported 
by only one patch. The latter will facilitate the compression of matrices 
by the ACA described later on.

\subsection*{Space-Time mesh} The space time discretization is
the cartesian product $S = S_\Gamma \times S_I$, which has a basis
that consists of all functions $\chi_{ij}\phi_{km}$. Using the vectors defined \eqref{def:chib} and \eqref{def:phib} a function in
$q \in S$ has the equivalent expansions
\begin{equation*}
  q(\xb,t) = \sum_{i=1}^{N_t} \sum_{j=1}^{D_t} \sum_{k=1}^{N_s} \sum_{m=1}^{D_s}
  q_{i j k m} \chi_{i j}(t) \phi_{k m} (\xb) 
  = \sum_{i=1}^{N_t} \sum_{k=1}^{N_s} \chib_i(t)^T Q_{i k} \phib_k(\xb),
\end{equation*}
where $Q_{i k} = [q_{i j k m}]_{j,m} \in \R^{D_t\times D_s}$.
Using the vector notations, it can be seen easily that the discretized thermal layer potentials have block structure. In particular, the matrix corresponding to the single layer potential decomposes into blocks $V_{i k , i' k'}$ of size $D_s D_t$
\begin{equation}\label{def:matV}
V_{i k , i' k'} =
  \intl_{I_i} \intl_{I_{i'}} \intl_{\Gamma_k}\intl_{\Gamma_{k'}} G(\xb-\yb,t-\tau)
  \chib_i(t)\chib_{i'}^T(\tau)\otimes\phib_k(\xb)\phib^T_{k'}(\yb)
  \,ds_\yb ds_\xb d\tau dt,
\end{equation}
where $\otimes$ denotes the Kronecker product and $\chib_i(t)\chib_{i'}^T(\tau)$
and $\phib_k(\xb)\phib^T_{k'}(\yb)$ are outer products. Agglomerating all spatial variables gives blocks of size  $N_s D_s D_t$
\begin{equation*}
V_{i i'} = \Big[ V_{i k , i' k'} \Big]_{k,k'}.
\end{equation*}
Note that $V_{i i'} = 0$ when $i'>i$, because $G(\rb,t-\tau)=0$ when
$\tau>t$.  Further, with the uniform subdivision of the time
interval, it follows that  $V_{i i'}$ only depends on the
difference $i-i'$ and hence there are matrices $A_d$ such that
$V_{i i'} = A_{i-i'}$. Thus the discrete
thermal potential is a block Toeplitz lower triangular matrix
\begin{equation}\label{toeplitz:uncompressed}
  \bf{V} = \begin{bmatrix}
    A_0\\
    A_1 & A_0 \\
    \vdots &       & \ddots\\
    A_{N_t} & \hdots  &       & A_0
    \end{bmatrix}.
\end{equation}

This structure suggests to solve the linear system $\bf{V}q = p$
by block wise forward elimination, where the $i$-th elimination
step is
\begin{equation}\label{forward:elimination}
\begin{aligned}
  b_i &= p_i - \sum_{i'=1}^{i-1}  A_{i-i'} q_{i'}\,,\\
  q_i &= A_0^{-1}b_i  \,.
\end{aligned}
\end{equation}
Note that the matrices $A_{d}$ are still large because they contain
the entire spatial dependence.  Because of the exponential decay of
the Green's function in space, one can expect that the matrices are
sparse when $d$ is small. However, they fill up as the difference
of the indices gets larger.

The linear system for $q_i$ can be solved iteratively by exploiting the
sparsity, we will give some more details in \Cref{sec:numresults}
below. Most of the computational work is spent evaluating
the right hand sides $b_i$. For this task, we will use a completely
different algorithm from naively evaluating the sum in 
\eqref{forward:elimination}. This will be described in the following
section.

\section{Hybrid ACA Method for the Discrete Thermal Layer Potential}\label{sec:aca}
\subsection*{Hierarchical partitioning of matrices and vectors} 
We first generate a hierarchical block partition of the system matrix ${\bf V}$ in
the time variable. The singularity of the Green's function occurs when
$t=\tau$ which appears in the diagonal and subdiagonal blocks of ${\bf V}$. As the
difference $t-\tau$ is increased, the kernel gets smoother and hence blocks of 
${\bf V}$ farther away from the diagonal can be better approximated by low-rank
matrices. This suggests the partitioning shown in \Cref{fig:H-matrix}. 
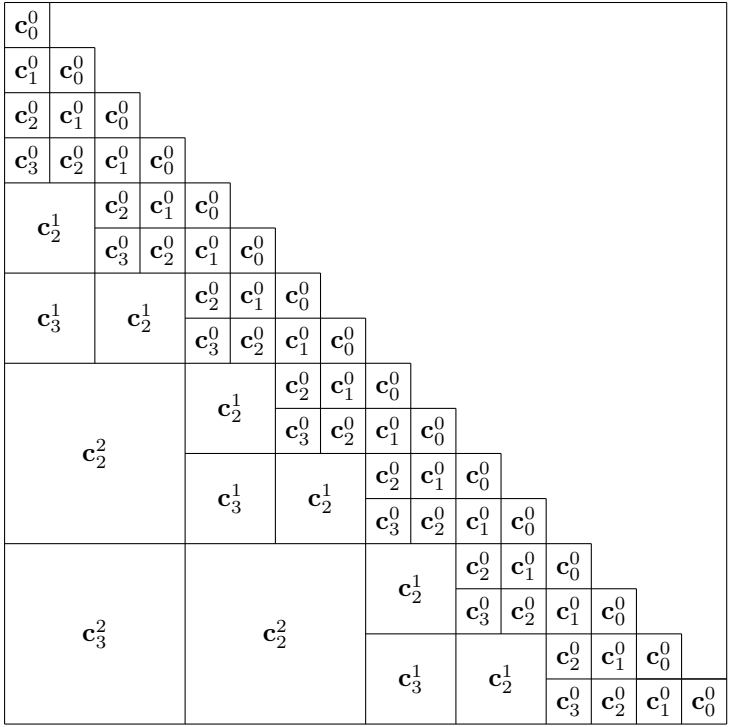
\begin{figure}
\begin{center}
\setlength{\unitlength}{0.6cm}
\begin{picture}(16,16)
  \put(0,0){\line(1,0){16}}
  \put(0,4){\line(1,0){13}}
  \put(0,8){\line(1,0){9}}
  \put(0,16){\line(1,0){16}}
  \put(0,0){\line(0,1){16}}
  \put(4,0){\line(0,1){13}}
  \put(8,0){\line(0,1){9}}
  \put(16,0){\line(0,1){16}}

  \put(0,10){\line(1,0){7}}
  \put(0,12){\line(1,0){5}}
  \put(0,14){\line(1,0){3}}
  \put(4,6){\line(1,0){7}}
  \put(8,2){\line(1,0){7}}
  \put(10,0){\line(0,1){7}}
  \put(12,0){\line(0,1){5}}
  \put(14,0){\line(0,1){3}}
  \put(6,4){\line(0,1){7}}
  \put(2,8){\line(0,1){7}}

  \put(1,12){\line(0,1){4}}
  \put(3,10){\line(0,1){4}}
  \put(5,8){\line(0,1){4}}
  \put(7,6){\line(0,1){4}}
  \put(9,4){\line(0,1){4}}
  \put(11,2){\line(0,1){4}}
  \put(13,0){\line(0,1){4}}
  \put(15,0){\line(0,1){2}}

  \put(14,1){\line(1,0){2}}
  \put(12,1){\line(1,0){4}}
  \put(10,3){\line(1,0){4}}
  \put(8,5){\line(1,0){4}}
  \put(6,7){\line(1,0){4}}
  \put(4,9){\line(1,0){4}}
  \put(2,11){\line(1,0){4}}
  \put(0,13){\line(1,0){4}}
  \put(0,15){\line(1,0){2}}
  \put(0.5,15.5){\makebox(0,0){${\bf c}_0^0$}}
  \put(1.5,14.5){\makebox(0,0){${\bf c}_0^0$}}
  \put(2.5,13.5){\makebox(0,0){${\bf c}_0^0$}}
  \put(3.5,12.5){\makebox(0,0){${\bf c}_0^0$}}
  \put(4.5,11.5){\makebox(0,0){${\bf c}_0^0$}}
  \put(5.5,10.5){\makebox(0,0){${\bf c}_0^0$}}
  \put(6.5,9.5){\makebox(0,0){${\bf c}_0^0$}}
  \put(7.5,8.5){\makebox(0,0){${\bf c}_0^0$}}
  \put(8.5,7.5){\makebox(0,0){${\bf c}_0^0$}}
  \put(9.5,6.5){\makebox(0,0){${\bf c}_0^0$}}
  \put(10.5,5.5){\makebox(0,0){${\bf c}_0^0$}}
  \put(11.5,4.5){\makebox(0,0){${\bf c}_0^0$}}
  \put(12.5,3.5){\makebox(0,0){${\bf c}_0^0$}}
  \put(13.5,2.5){\makebox(0,0){${\bf c}_0^0$}}
  \put(14.5,1.5){\makebox(0,0){${\bf c}_0^0$}}
  \put(15.5,0.5){\makebox(0,0){${\bf c}_0^0$}}
  \put(0.5,14.5){\makebox(0,0){${\bf c}_1^0$}}
  \put(1.5,13.5){\makebox(0,0){${\bf c}_1^0$}}
  \put(2.5,12.5){\makebox(0,0){${\bf c}_1^0$}}
  \put(3.5,11.5){\makebox(0,0){${\bf c}_1^0$}}
  \put(4.5,10.5){\makebox(0,0){${\bf c}_1^0$}}
  \put(5.5,9.5){\makebox(0,0){${\bf c}_1^0$}}
  \put(6.5,8.5){\makebox(0,0){${\bf c}_1^0$}}
  \put(7.5,7.5){\makebox(0,0){${\bf c}_1^0$}}
  \put(8.5,6.5){\makebox(0,0){${\bf c}_1^0$}}
  \put(9.5,5.5){\makebox(0,0){${\bf c}_1^0$}}
  \put(10.5,4.5){\makebox(0,0){${\bf c}_1^0$}}
  \put(11.5,3.5){\makebox(0,0){${\bf c}_1^0$}}
  \put(12.5,2.5){\makebox(0,0){${\bf c}_1^0$}}
  \put(13.5,1.5){\makebox(0,0){${\bf c}_1^0$}}
  \put(14.5,0.5){\makebox(0,0){${\bf c}_1^0$}}
  \put(0.5,13.5){\makebox(0,0){${\bf c}_2^0$}}
  \put(1.5,12.5){\makebox(0,0){${\bf c}_2^0$}}
  \put(2.5,11.5){\makebox(0,0){${\bf c}_2^0$}}
  \put(3.5,10.5){\makebox(0,0){${\bf c}_2^0$}}
  \put(4.5,9.5){\makebox(0,0){${\bf c}_2^0$}}
  \put(5.5,8.5){\makebox(0,0){${\bf c}_2^0$}}
  \put(6.5,7.5){\makebox(0,0){${\bf c}_2^0$}}
  \put(7.5,6.5){\makebox(0,0){${\bf c}_2^0$}}
  \put(8.5,5.5){\makebox(0,0){${\bf c}_2^0$}}
  \put(9.5,4.5){\makebox(0,0){${\bf c}_2^0$}}
  \put(10.5,3.5){\makebox(0,0){${\bf c}_2^0$}}
  \put(11.5,2.5){\makebox(0,0){${\bf c}_2^0$}}
  \put(12.5,1.5){\makebox(0,0){${\bf c}_2^0$}}
  \put(13.5,0.5){\makebox(0,0){${\bf c}_2^0$}}
  \put(0.5,12.5){\makebox(0,0){${\bf c}_3^0$}}
  \put(2.5,10.5){\makebox(0,0){${\bf c}_3^0$}}
  \put(4.5,8.5){\makebox(0,0){${\bf c}_3^0$}}
  \put(6.5,6.5){\makebox(0,0){${\bf c}_3^0$}}
  \put(8.5,4.5){\makebox(0,0){${\bf c}_3^0$}}
  \put(10.5,2.5){\makebox(0,0){${\bf c}_3^0$}}
  \put(12.5,0.5){\makebox(0,0){${\bf c}_3^0$}}
  \put(1,11){\makebox(0,0){${\bf c}_2^1$}}
  \put(3,9){\makebox(0,0){${\bf c}_2^1$}}
  \put(5,7){\makebox(0,0){${\bf c}_2^1$}}
  \put(7,5){\makebox(0,0){${\bf c}_2^1$}}
  \put(9,3){\makebox(0,0){${\bf c}_2^1$}}
  \put(11,1){\makebox(0,0){${\bf c}_2^1$}}
  \put(1,9){\makebox(0,0){${\bf c}_3^1$}}
  \put(5,5){\makebox(0,0){${\bf c}_3^1$}}
  \put(9,1){\makebox(0,0){${\bf c}_3^1$}}
  \put(2,6){\makebox(0,0){${\bf c}_2^2$}}
  \put(6,2){\makebox(0,0){${\bf c}_2^2$}}
  \put(2,2){\makebox(0,0){${\bf c}_3^2$}}
\end{picture}
\caption{\label{fig:H-matrix}Partitioning of 
 ${\bf V}$
 for the case of four temporal levels.}
\end{center}
\end{figure}
This partitioning can be more formally described by a hierarchy of
time intervals. To that end, divide the interval
$[0,T]$ two equal subintervals and then recursively divide each subinterval
until the nodes in the finest level contain a predetermined (small) number
of time steps $n_T$. The relationship between $N_t$, $n_T$ and the number of refinements $L$ is
\begin{equation*}
    N_t = 2^L n_T.
\end{equation*}
Moreover, the number of temporal degrees of freedom in the discretization 
is $N_t D_t$. 
The refinement scheme results in a binary tree of intervals with nodes
$$
I_n^\ell = \frac{T}{2^{L-\ell}} [n,n+1), \quad n\in\{0,\dots,2^{L-\ell}-1\}.
$$
Mind that $\ell=0$ denotes the finest and $\ell=L$ the coarsest level. We denote by $\Ic_n^\ell$ the set of indices
We denote by $\Ic_n^\ell$ the set of indices
of the temporal basis functions that are supported in $I_n^\ell$, that
is,
\begin{equation*}
\Ic_n^\ell = \left\{ i \,:\, I_i \subset I_n^\ell \right\}.
\end{equation*}  
Note that $N_{\ell,t} := \# \Ic_n^\ell$ is the same for all $n$.
The block ${\bf c}^{\ell}_d$ of the matrix ${\bf V}$ contains the entries
\begin{equation}\label{def:cBlock}
\left[ {\bf c}^{\ell}_d \right]_{i k, i',k'} = V_{i k, i' k'},\quad
i\in \Ic_d^\ell, \; i'\in  \Ic_0^\ell, \;\text{and}\; k,k' \in \{1,\dots,N_s\}.
\end{equation}

Because of the hierarchical block Toeplitz structure of ${\bf V}$, the only
distinct blocks are
${\bf c}^{\ell}_2$ and  ${\bf c}^{\ell}_3$, $\ell\in \{2,\dots,L\}$
and 
${\bf c}^{L}_0$ and  ${\bf c}^{L}_1$. Since the superscript $d$ indicates
the relative separation of the time variables, the former blocks will be 
referred to as the temporal far-field, and the latter blocks as the temporal 
near-field.

To describe the block elimination algorithm below, we partition vectors as
\begin{equation}\label{def:qBlock}
  {\bf q}_{\ell}^n = \left[ q_{i,k}\right]_{i\in \Ic_{\ell}^n, 1\leq k\leq N_s},
\end{equation}
where $n \in \{0,\dots, 2^{L-\ell}-1\}$ and $\ell \in \{0,\dots,L\}$.

\subsection*{Block Forward Elimination Algorithm}
We now turn to solve the system ${\bf V}{\bf q} = {\bf p}$ using a hierarchical
version of block-forward elimination which is based on the partition of ${\bf V}$ 
in \Cref{fig:H-matrix}. Here, in the $n$-th time step, the partially
computed vector ${\bf q}$ is needed to find the components of {\bf q} that belong
to this time step. To that end, consider the base-2 expansion of $n$
$$
n = (\sigma_R\dots \sigma_1 \sigma_0)_2 
= \sigma_R 2^R + \dots + 2 \sigma_1 + \sigma_0
$$
where $\sigma_i \in \{ 0,1\}$, $0\leq i < R$ and $\sigma_R = 1$.
The following notations will be useful later on
\begin{itemize}
\item[] $R$: Index of the highest digit in the binary representation
  of $k$.
\item[] $S$: Index of the lowest non-zero digit in the binary representation
  of $k$.
\end{itemize}
For example, the base-2 representation of the number 20 is $(10100)_2$
which implies that $R=4$ and $S=2$.

The binary subdivision scheme of the time interval implies that
the parent of the finest level interval $I_{n}^0$ in level $\ell$ is
$I_{n_\ell}^\ell$ where $n_\ell$ is given by
\begin{equation*}
n_\ell = (\sigma_R\dots \sigma_{\ell+1} \sigma_\ell)_2
= \sigma_R 2^{R-\ell} + \dots + 2 \sigma_{\ell+1} + \sigma_\ell
\end{equation*}

With these notations, the hierarchical block forward elimination
algorithm is described in \Cref{algo:bForwardElim}. In this
algorithm, we set ${\bf p}_{n}^\ell = {\bf 0}$ whenever $n<0$. To
understand the algorithm, it is important to keep in mind that ${\bf p}_{n}^\ell$
signifies a certain portion of the vector ${\bf p}$, that is not
necessarily distinct for different $n$ and $\ell$. Thus, a change of 
${\bf p}_{n}^0$ will result in the same change of the parents
${\bf p}_{n_\ell}^\ell$ in all levels.

\begin{algorithm}
\caption{Hierarchical Block Forward Elimination Algorithm to solve
  ${\bf V}{\bf q} = {\bf p}$.}
\label{algo:bForwardElim}
\begin{algorithmic}
\For{ $n_0=0: 2^L-1$}

    \State Compute $S$ in the binary representation of $n_0$.
    \State ${\bf p}_{n_\ell}^\ell \leftarrow
               {\bf p}_{n_\ell}^\ell - {\bf c}_{1}^\ell\, {\bf q}_{n_\ell-1}^\ell$
   \Comment{Subtract Near-field.}

    \For{ $\ell=0:S$}\Comment{Subtract Far-field.}
        \State Compute $n_\ell$ from $n_0$.
        \State ${\bf p}_{n_\ell}^\ell \leftarrow
               {\bf p}_{n_\ell}^\ell - {\bf c}_{2}^\ell\, {\bf q}_{n_\ell-2}^\ell$
    \EndFor
    \State ${\bf p}_{n_S}^S \leftarrow
           {\bf p}_{n_S}^S - {\bf c}_{3}^\ell\, {\bf q}_{n_S-3}^\ell$\\
 
   \State Solve ${\bf c}_{0}^0\, {\bf q}_{n_0}^0 = {\bf p}_{n_0}^0$
\Comment{Diagonal block.}
\EndFor
\end{algorithmic}
\end{algorithm}

\subsection*{Low-rank approximation for the temporal far-field}
The temporal far-field consists of the blocks in $\mathbf{V}$ that have well 
separated time variables, which are the $\mathbf{c}^\ell_d$'s with $d\geq 2$.
As it is apparent from \Cref{fig:H-matrix}, they contribute to the dominant 
computational cost of the forward elimination algorithm.
If these matrices are evaluated directly, then \Cref{algo:bForwardElim} has the same complexity as the
standard block forward elimination method in
\eqref{forward:elimination}.

We now describe a fast method to evaluate the forward elimination
which is based on the separation of the temporal variables.
For $(t,\tau) \in I^\ell_d \times I^\ell_0$ and $d\geq 2$ the heat 
kernel is a smooth function in all variables. Hence, it can be 
interpolated by
\begin{equation}\label{interpol:G}
  G(\xb-\yb,t-\tau) \approx
  \sum\limits_{\beta=0}^{p-1} \sum\limits_{\beta'=0}^{p-1} 
  G(\xb-\yb,t_\beta-\tau_{\beta'}) L_\beta(t) L_{\beta'}(\tau)   
\end{equation}
where $t_\beta, \tau_{\beta'}$ are the interpolation nodes and
$L_\beta(t)$, $L_{\beta'}(\tau)$ the corresponding Lagrange
polynomials. Here, we use Chebyshev nodes, which for the interval $I^\ell_0$
are given by
\begin{equation*}
  \tau_\beta = T 2^{-\ell} \left( \half + 
    \half \cos\left( \frac{\pi}{2} \frac{2\beta+1}{p} \right) \right)
\end{equation*}
The interpolation error can be bounded independently of the spatial
difference $\xb-\yb$. A detailed analysis of this error can be found
in \cite{tausch12a}.

Substitution of \eqref{interpol:G} into \eqref{def:matV} shows that
\begin{equation*}
  \Big[{\bf c}^\ell_{d}\Big]_{i j k m, i' j' k' m'} =
  \sum\limits_{\beta=0}^{p-1} \sum\limits_{\beta'=0}^{p-1}
  M_{\beta, i j} A_{\beta k m, \beta' k' m'} M_{\beta', i' j'}
\end{equation*}
where $i\in \Ic^\ell_d$, $i'\in \Ic^\ell_0$, $1\leq j \leq D_t$, 
$1\leq m\leq D_s$ and 
\begin{align}
  M_{\beta, i j} &= \int_{I^\ell_0} L_\beta(t) \chi_{i j}(t) \,dt
  \label{def:Mmat}\\
  A_{\beta k m,\beta' k' m'} &= 
     \intl_{\Gamma}\intl_{\Gamma} G(\xb-\yb,t_\beta-\tau_{\beta'})
                              \phi_{k m}(\xb)\phi_{k' m'}(\yb) \,ds_\yb ds_\xb.
  \label{def:Afmat}                              
\end{align}
For the coefficients of the matrix
vector product
${\bf p}^\ell_{n+d} = {\bf c}^\ell_{d} {\bf q}^\ell_{n}$ it follows
that after rearranging the order of summations that
\begin{equation*}
  p_{i j k m} = \suml_{\beta} M_{\beta,i j}
           \suml_{\beta',k',m'} A_{\beta k m, \beta' k' m'}
           \suml_{i',j'} M _{\beta',i' j'} q_{i' j' k' m'}.
\end{equation*}
Thus, the sequence of calculating the product is
\begin{equation}\label{fast:time:algor}
\begin{aligned}
  m_{\beta' k' m'} &= \suml_{i' j'} M _{\beta',i' j'} \, q_{i' j' k' m'},\\
  u_{\beta k m} &= \suml_{\beta',k',m'}  
                A_{\beta k m, \beta' k' m'} \, m_{\beta' k' m'},\\
  p_{i j k m} &= \suml_{\beta} M _{\beta,i j} \, u_{\beta k m}.
\end{aligned}
\end{equation}
From this calculation, it also follows that the following relationship
holds
\begin{equation}\label{c:m:a}
  {\bf c}^\ell_{d} = (M^T_\ell \otimes I )\, A^\ell_{d}\, (I \otimes M_\ell).
\end{equation}
Here $A^\ell_d$ is the matrix with coefficients in \eqref{def:Afmat}, 
with the $\beta k m$-indices are flattened into a linear index, thus
$A^\ell_{d}$ is a square matrix of size $N_s D_s p$. Moreover,
$M_\ell$ is the matrix with coefficients in \eqref{def:Mmat}, and 
$I=I_{D_s N_s}$.

\section{ACA compression for the spatial blocks}
The dominant cost of evaluating the thermal layer potential via the
algorithm of \eqref{fast:time:algor} is the second step. We write this
as a matrix-vector product
\begin{equation*}
{\bf u}^\ell_{n} = A^\ell_{d} {\bf m}^\ell_{n-d}.
\end{equation*}
To obtain an efficient algorithm, the matrix-vector product will be
accelerated using a block wise low-rank approximation for the matrix 
$A^\ell_{d}$. This section describes the process in more detail.

To that end, consider a cube that fully contains the surface $\Gamma$. This 
cube is uniformly refined until the finest cubes contain a predetermined maximal 
number of patches. The cubes in the $\ell$-th level are denoted by $C^\ell_\nu$, 
where $\ell=0$ is the finest level, $\ell=L_s$ is the level of the initial 
cube, and $\nu$ is an index for all nonempty cubes in level $\ell$. 
Further, $\Cc^\ell_\nu$ denotes the index set of
$\Gamma_k$'s with centroid in $C^\ell_\nu$. 

We define the separation ratio of two different cubes in the same level as
\begin{equation}\label{def-sepRatio}
\eta(\nu,\nu') = \frac{\rho_{\nu'} + \rho_{\nu'}}{\abs{\xb_\nu - \xb_{\nu'}}}\,,
\quad \nu \not= \nu',
\;\mbox{and}\;\eta(\nu,\nu) = \infty.
\end{equation}
Here $\rho_\nu$ is the diameter and $\xb_\nu$ the centroid of all patches in 
$\Cc^\ell_\nu$.

The neighbors of a cube are the cubes in the same level 
for which the separation ratio is greater than a predetermined
constant  $\eta_0 < 1$. That is,
\begin{equation*}
  \Nbrs(\nu) = \{ \nu'  \,:\, \eta(\nu,\nu') > \eta_0  \}.
\end{equation*}
Recall that the heat kernel decays exponentially in space at a rate
that depends on the time difference $t-\tau$. Because of the scaling
of the space and time variables in the kernel, it suffices to consider
neighboring interactions if the spatial level $\ell_s$ depends on the temporal
level $\ell$ as follows
\begin{equation} \label{level_relation}
  \ell_s =  \min\left\{ \mbox{floor}\left(\frac{\ell}{2}\right), L_s \right\}
\end{equation}
where $\mbox{floor}$ rounds down to the next nearest integer. In \cite{tausch06},
it was shown that with this scaling of the temporal and spatial levels 
\begin{enumerate}
\item Interactions of cubes in the same level can be neglected unless
  they are neighbors. The resulting error only depends on $\eta_0$, but
  not on the level.
\item For $\nu,\nu'$ neighbors, and $\xb \in C_\nu^\ell$, $\yb \in
  C_{\nu'}^\ell$ the kernels $G(\xb,\yb,t_\beta - \tau_{\beta'})$
  can be approximated by polynomials in $\xb$ and $\yb$, with error
  bounds independent of the level $\ell$.
\end{enumerate}
Thus, the matrix $A^\ell_{d}$ is subdivided into blocks
$A^\ell_{d}(\nu,\nu')$, with coefficients
\begin{equation*}
A^\ell_{d}(\nu,\nu') = \Big[ A_{\beta k m,\beta' k' m'} \Big]_{
  k \in \Cc_\nu^{\ell_s}, k'\in\Cc_{\nu'}^{\ell_s} \atop
  {1 \leq m,m' \leq D_s \atop 0\leq \beta,\beta' < p}} , \quad \nu'\in \Nbrs(\nu)
\end{equation*}
and we set $A^\ell_{d}(\nu,\nu') = 0$ when $\nu$ and $\nu'$ are not neighbors. 

Since the nonzero blocks $A^\ell_{d}(\nu,\nu')$ are still large in the coarser spatial levels, we use the well-known adaptive cross approximation to obtain the approximating low-rank matrix factorization
\begin{equation}\label{aca:farfield}
A^\ell_{d}(\nu,\nu') = U V^T + E^\ell_{d}(\nu,\nu').
\end{equation}
Since the heat kernel in the $A^\ell_{d}(\nu,\nu')$'s is smooth, the inner 
dimension $r$ in the $U V^T$ product can be much smaller than the dimensions of
the original matrix, while still guaranteeing a small error. A more detailed 
analysis will be presented in \Cref{sec:analysis}.
The computation of $U$ and $V$ only involves $r$ rows and columns of 
of $A^\ell_{d}(\nu,\nu')$. A detailed description of the ACA factorization can 
be found, e.g., in \cite{bebendorf08}, Algorithm 3.1.

\subsection*{Temporal Near-field} We now turn to the temporal near-field of 
$\mathbf{V}$, which consists of the matrices $\mathbf{c}_0^0$ and  $\mathbf{c}_1^0$.  These blocks contain the singularity of the heat kernel and only a small number of timesteps. Therefore only the spatial variable will be compressed as described below. The explicit form of the near-field blocks is
\begin{equation*}
  \mathbf{c}_0^0 = \begin{bmatrix}
    A_0\\
    A_1 & A_0 \\
    \vdots & \ddots & \ddots \\
    A_{n_T-1} & \hdots & A_1 & A_0
  \end{bmatrix}
  \quad\mbox{and}\quad
  \mathbf{c}_1^0 = \begin{bmatrix}
    A_{N_t} & \dots & \dots &  A_{1}\\
    A_{N_t+1} & \ddots &    &  A_{2}\\
    \vdots & \ddots & \ddots & \vdots \\
    A_{2n_T-1} & \hdots & A_1 & A_{N_t}
  \end{bmatrix}
\end{equation*}
where the coefficients of $A_d$ are  
\begin{equation}\label{def:Anmat}  
  \Big[A_d\Big]_{k m j, k' m' j'} = 
     \intl_{\Gamma_k}\intl_{\Gamma_k} G_{d j j'}(\xb-\yb)
                              \phi_{k m}(\xb)\phi_{k' m'}(\yb) \,ds_\yb ds_\xb
\end{equation}
and
\begin{equation*}
  G_{d j j'}(\rb) = 
     \intl_{I^0_d}\intl_{I^0_0} G(\rb, t-\tau) \chi_j(t) \chi_{j'}(\tau)\,d\tau dt
\end{equation*}
is the time integrated kernel. Since the $\chi_j$'s are polynomials, 
the functions $G_d$ can be explicitly expressed in terms of
exponential and error functions, see, e.g., \cite{messner-etal14}.

For $d=0$ this kernel has a
$|\rb|^{-1}$ singularity,  $d=1$ the gradient has a $|\rb|^{-1}$
singularity, and for  $d\geq 2$ the kernel is smooth. In all cases of 
$0\leq d < 2 n_T$ there is exponential decay in $|\rb|$ and thus, for 
the finest level cluster $\nu$, only the neighbors $\Nbrs(\nu)$ contribute 
significantly to the matrix vector product. All remaining cubes can be neglected.

For the temporal far-field, the spatial meshwidth $h_s$ does not affect the asymptotic computational complexity of the method. However, in the near-field calculation, it does. It is straightforward to see that if $h_t \lesssim h_s^2$ the number of elements in a finest level cluster is independent of $h_s$, hence the evaluation of $A_d(\nu,\nu')$ can be done directly, without any ACA acceleration. Otherwise, if the spatial mesh is highly refined, the spatial clustering has to be
further refined, and the matrices have to be evaluated using a
hierarchical matrix approach similar to what is typically done for  the
$1/|\mathbf{r}|$ kernel in potential theory.

\section{Continuous spatial discretizations}\label{sec:contin}
So far, we have described the method for discontinuous ansatz
spaces which are suitable for single layer potential . However, to treat more general  boundary integral equations of the heat equation, elements with continuity in space are required. Hence, we describe in this section how to accomplish the ACA compression of the matrices 
\eqref{def:Afmat} and \eqref{def:Anmat} for  continuous elements. Note that continuity in time is not required  for thermal potentials, see, e.g., \cite{costabel90}. 

The space of continuous functions in $S_\Gamma$ is a subspace and
is denoted by $S_\Gamma^{cont}$. The nodal basis functions, denoted by
$\phi_v$, satisfy $\phi_v(\vb_{v'}) = \delta_{v,v'}$ for all vertices 
$v,v' \in \{1,\dots, N_v\}$ in the nodal basis. Since the $\phi_v$'s 
are continuous, they have support over several triangular patches.

When setting up a discrete layer operator, one
usually computes the integrals panel wise for the $D_s^2$ different
combinations of shape functions in the $\xb$ and $\yb$
variables. Then these values are added into the appropriate coefficients of the
matrix. This approach avoids repeated integration over patches that
would occur if the matrix was computed one coefficient at a time, by integrating 
over the entire support of each nodal basis function. Unfortunately, in the 
ACA compression, individual rows and columns have to be computed at a time, which 
can only be done by repeatedly integrating over the same pairs of patches.

This problem can be avoided by computing the ACA compression for the larger matrix 
in the discontinuous basis and then applying extension and
restriction operators to switch between continuous and
discontinuous spaces. To define an extension operator from $S_\Gamma$ to $S_\Gamma^{cont}$ consider 
the matrix $E \in \R^{D_s N_s \times N_v}$, with coefficients
\begin{equation}\label{def:Emat}
  E_{k m,v} = \begin{cases}
    1  & \text{if } \vb_{km} = \vb_v\,,\\
    0 & \text{else.}
  \end{cases}
\end{equation}
For the basis function $\phi_v \in S_\Gamma^{cont}$ it follows that
\begin{equation}\label{phiv:km}
  \phi_v(\xb) = \sum\limits_{\{ k,m: \vb_{km} = \vb_v\} } \!\!\! \! \phi_{km}(\xb)
              = \sum_{k=1}^{N_s} \sum_{M=1}^{D_s}   E_{km,v} \phi_{km}(\xb).
\end{equation}
If  $G(\xb,\yb)$ is one of the kernels in 
\eqref{def:Afmat} and \eqref{def:Anmat}, then the corresponding
matrix in the continuous matrix is given by
\begin{equation}\label{Avv}
  A^{cont}_{v,v'} = \intl_{\Gamma}\intl_{\Gamma} G(\xb-\yb)
                              \phi_v(\xb)\phi_{v'}(\yb) \,ds_\yb ds_\xb
\end{equation}
Substitution of \eqref{phiv:km} into \eqref{Avv} shows that
\begin{equation}\label{A:cont:discont}
  A^{cont} = E^T A E ,
\end{equation}
where $A$ is either $A_d^\ell$ or $A_d$ whose coefficients are defined
in \eqref{def:Afmat} or \eqref{def:Anmat}, respectively. A simple
calculation also shows that
\begin{equation}\label{EtEDv}
  E^T E = D_v,
\end{equation}
where $D_v \in \R^{N_v\times N_v}$ is a diagonal matrix that contains
the degrees of the nodes.

We now return to the $i$-th elimination step in the forward
elimination. If in \eqref{forward:elimination} $A_d$ is replaced by the matrix of
the continuous basis function, then it follows from
\eqref{A:cont:discont} and \eqref{EtEDv} that
\begin{align*}
  A_0^{cont} q_i &= p_i  - 
     \sum_{i'=1}^{i-1} E^T A_{i-i'} E q_{i'}\,, \\
           &= E^T \left( \hat p_i - \sum_{i'=1}^{i-1} A_{i-i'} \hat q_{i'}\right) ,
\end{align*}
where $\hat p_i = E D_v^{-1} p_i$ and  $\hat q_{i} = E q_{i}$. This
implies that the solution with continuous elements can be computed
using only matrices with discontinuous elements if in
\eqref{forward:elimination} the right hand side is replaced by $\hat p_i$
and the vectors $q_{i'}$ are replaced by $\hat q_{i'}$. Moreover, the linear system with $A_0^{cont}$ is solved by iteration where the matrix-vector product is evaluated using \eqref{A:cont:discont}.

\section{Analysis}\label{sec:analysis}
In the above algorithm, low-rank approximations of matrix blocks are generated in the near-field as well as the far-field for different temporal levels. We now turn to the analysis of the error introduced
by these approximations and the resulting asymptotic complexity.  The
goal is to optimize the choice of the ACA truncation parameter for the
type and level of the matrix block.

While the algorithm described above is applicable to tensor products of fairly general spatial and temporal meshes, we consider here the case where an initially coarse mesh is refined uniformly such that the temporal meshwidths satisfy the asymptotic relation
$h_t \sim h_s^2$. That is, in a refinement step, each spatial patch and
each temporal interval are refined into four pieces. While this
assumption is somewhat restrictive, it keeps the technical level of
the ensuing discussion at a minimum.

With the uniform refinement scheme, it follows that the maximal
number of patches $\Gamma_k$ associated with the cube $\nu$ in spatial
level $\ell_s$ satisfies the asymptotic estimate
\begin{equation}\label{est:patches}
N_s(\nu) \lesssim  4^{\ell_s} = 2^{\ell}.
\end{equation}
Here, the second step follows form \eqref{level_relation} and $\lesssim$ means that the inequality holds up to a constant independent of the number of mesh refinements. The number of nonempty cubes in a given level is bounded by
\begin{equation}\label{est:cubes}
\# \Cc_{\ell_s} \lesssim  N_s 4^{-\ell_s} = N_s 2^{-\ell}.
\end{equation}
For ensuing analysis  we need the following basic estimates. The next lemma is useful for matrices that can be partitioned in a way such that most blocks are zero matrices. That is, the number of nonzero blocks per row and column, given by
\begin{equation*}
\gamma_C = \max_{k} \#\{ \ell: B_{k\ell} \not= 0 \}
\quad\mbox{and}\quad
\gamma_R = \max_{\ell} \#\{ k: B_{k\ell} \not= 0 \}
\end{equation*}
are small.

\begin{lemma}\label{lem:blocksparse}
If $A = [B_{k \ell}]_{k,\ell}$, where
$B_{k \ell}\in \R^{m_k \times n_\ell}$, then the following estimate holds
\begin{equation*}
\Norm{A}_2 \leq \gamma_C^\half \gamma_R^\half 
\max_{k,\ell} \Norm{B_{k \ell}}_2.
\end{equation*}
\end{lemma}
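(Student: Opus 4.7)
The plan is to prove the estimate by the standard bootstrap from the Schur-type bound: decompose $Ax$ block-wise, apply Cauchy--Schwarz once using the row-wise block-sparsity $\gamma_C$, then swap the order of summation and apply the column-wise sparsity $\gamma_R$. Let $M = \max_{k,\ell}\Norm{B_{k\ell}}_2$ and write $x = (x_\ell)_\ell$ as a block vector partitioned conformally with the column-blocks of $A$. Since $(Ax)_k = \suml_\ell B_{k\ell} x_\ell$ and only indices $\ell$ with $B_{k\ell}\neq 0$ contribute, the triangle inequality gives
\begin{equation*}
  \Norm{(Ax)_k}_2 \leq \suml_{\ell : B_{k\ell}\neq 0} \Norm{B_{k\ell}}_2 \Norm{x_\ell}_2 \leq M \suml_{\ell : B_{k\ell}\neq 0} \Norm{x_\ell}_2.
\end{equation*}

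Next I would square this inequality and invoke Cauchy--Schwarz on the (at most $\gamma_C$) surviving summands to obtain
\begin{equation*}
  \Norm{(Ax)_k}_2^2 \leq M^2 \gamma_C \suml_{\ell : B_{k\ell}\neq 0} \Norm{x_\ell}_2^2.
\end{equation*}
Summing over $k$ and interchanging the order of the outer and inner sums transforms the double sum $\suml_k \suml_{\ell : B_{k\ell}\neq 0}$ into $\suml_\ell \#\{k : B_{k\ell} \neq 0\} \leq \gamma_R$ in the inner count, giving
\begin{equation*}
  \Norm{Ax}_2^2 = \suml_k \Norm{(Ax)_k}_2^2 \leq M^2 \gamma_C \gamma_R \suml_\ell \Norm{x_\ell}_2^2 = M^2 \gamma_C \gamma_R \Norm{x}_2^2.
\end{equation*}
Taking the supremum over unit $x$ yields the claim $\Norm{A}_2 \leq \gamma_C^\half \gamma_R^\half M$.

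There is no real obstacle here beyond keeping the roles of $\gamma_C$ and $\gamma_R$ straight: the row-count $\gamma_C$ is the one that enters through Cauchy--Schwarz at the block level, whereas the column-count $\gamma_R$ enters after the summation swap to control how many rows can hit any given column-block $x_\ell$. The symmetry between the two appearances is exactly what produces the geometric mean $\gamma_C^\half \gamma_R^\half$, which is the block analogue of the familiar bound $\Norm{A}_2 \leq (\Norm{A}_1 \Norm{A}_\infty)^\half$ for scalar matrices.
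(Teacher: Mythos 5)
Your proof is correct and relies on the same mechanism as the paper's: one application of Cauchy--Schwarz combined with counting how many nonzero blocks can hit each subvector, which is exactly where $\gamma_C$ and $\gamma_R$ enter in the roles you describe. The only cosmetic difference is that you bound $\Norm{Ax}_2$ directly (Cauchy--Schwarz within each block row, then a summation swap), while the paper bounds the bilinear form $\abs{x^T A y}$ symmetrically over the nonzero block pairs; the two arguments are interchangeable and give the identical constant $\gamma_C^{\half}\gamma_R^{\half}$.
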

\begin{proof}
Let the vectors $x$ and $y$ be partitioned into subvectors $x_k\in
\R^{m_k}$ and  $y_\ell\in \R^{n_\ell}$, then 
\begin{align*}
  \abs{x^T A y } 
  &= \abs{ \sum_{(k,\ell):B_{k \ell}\not=0} x_k^T B_{k \ell} y_\ell }  
   \leq \sum_{(k,l): B_{k \ell}\not=0} 
   \Norm{B_{k \ell}}_2 \Norm{x_k}_2 \Norm{y_\ell}_2,\\
  &\leq \max_{k,\ell} \Norm{B_{k \ell}}_2
    \left(\sum_{(k,\ell): B_{k \ell}\not=0} \Norm{x_k}_2^2\right)^{\half}
    \left(\sum_{(k,\ell): B_{k \ell}\not=0} \Norm{y_\ell}_2^2\right)^{\half},\\
  &\leq  \gamma_C^\half\gamma_R^\half 
    \max_{k,\ell} \Norm{B_{k \ell}}_2 
    \left(\sum_{k} \Norm{x_k}_2^2\right)^{\half}
    \left(\sum_{\ell} \Norm{y_\ell}_2^2\right)^{\half}.
\end{align*}
This implies the assertion.
\end{proof}
The following lemma establishes estimates between the coefficient
vectors and the corresponding basis function expansions. This is a
standard result in finite element approximations and can also be
demonstrated using \Cref{lem:blocksparse}.
\begin{lemma}\label{lem:massmatrix}
There are constants $c_k>0$ such that
\begin{align*}
  f = \sum_{k m} f_{k m} \phi_{k m}
  & \quad \Rightarrow \quad 
     c_1 h_s  \Norm{\bf f}_2  \leq \Norm{f}_{L_2(\Gamma)} \leq c_2h_s  \Norm{\bf f}_2\\
  g = \sum_{i j} g_{ij} \chi_{i j}
  & \quad \Rightarrow  \quad
     c_3 h_t^\half \Norm{\bf g}_2  \leq \Norm{g}_{L_2(I)} \leq c_4 h_t^\half \Norm{\bf g}_2\\
  u = \sum_{k m i j} u_{k m i j} \phi_{km}\chi_{i j}
  & \quad \Rightarrow \quad                                    
     c_5 h_s h_t^\half \Norm{\bf u}_2  \leq \Norm{u}_{L_2(\Gamma\times I)} \leq c_6 h_s h_t^\half \Norm{\bf u}_2
  \end{align*}
\end{lemma}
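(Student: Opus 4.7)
The approach is the standard scaling argument via reference elements, combined with the block-sparse structure of the resulting mass matrices, as suggested by the reference to \Cref{lem:blocksparse}.

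First I would tackle the spatial estimate. Because the basis functions $\phi_{km}$ are supported only on the single patch $\Gamma_k$, the global mass matrix $M = [\int_\Gamma \phi_{km}\phi_{k'm'}\,ds]$ is block diagonal after grouping coefficients by patch, with blocks $M_k = [\int_{\Gamma_k}\phi_{km}\phi_{km'}\,ds]_{m,m'} \in \R^{D_s\times D_s}$. A change of variables to the reference triangle writes each $M_k$ as $M_k = |J_k|\hat M_k$, where $\hat M_k$ is the reference mass matrix associated with the (fixed-degree) nodal polynomials and $|J_k|$ is the surface Jacobian, which under the uniform refinement assumption satisfies $|J_k| \sim h_s^2$ uniformly in $k$. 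Since $\hat M_k$ is symmetric positive definite with eigenvalues in an interval $[\hat\lambda_1,\hat\lambda_2]$ that is independent of the mesh, we obtain
\begin{equation*}
\underline{c}\, h_s^2 \|{\bf f}_k\|_2^2 \leq {\bf f}_k^T M_k {\bf f}_k \leq \overline{c}\, h_s^2 \|{\bf f}_k\|_2^2.
\end{equation*}
Applying \Cref{lem:blocksparse} with $\gamma_R=\gamma_C=1$ to the block diagonal matrix $M$ (and similarly to $M^{-1}$, which is block diagonal with blocks $M_k^{-1}$) yields $\|M\|_2 \lesssim h_s^2$ and $\|M^{-1}\|_2 \lesssim h_s^{-2}$. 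The first spatial estimate follows from $\|f\|_{L_2(\Gamma)}^2 = {\bf f}^T M {\bf f}$.

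The temporal estimate is entirely analogous. The basis functions $\chi_{ij}$ are supported only on $I_i$, so the temporal mass matrix $M^t$ is block diagonal with $D_t\times D_t$ blocks $M^t_i$. The reference interval transformation has Jacobian $h_t$, so $M^t_i = h_t \hat M^t$ and the same block-diagonal argument yields $\|M^t\|_2 \sim h_t$ and $\|(M^t)^{-1}\|_2 \sim h_t^{-1}$, producing the factor $h_t^{\half}$.

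For the tensor-product estimate, the space-time mass matrix is the Kronecker product $M^{st} = M^t \otimes M$ once coefficients are ordered compatibly with the tensor basis $\chi_{ij}\phi_{km}$. Using $\|A\otimes B\|_2 = \|A\|_2 \|B\|_2$, the extreme eigenvalues multiply, giving $\|M^{st}\|_2 \sim h_s^2 h_t$ and $\|(M^{st})^{-1}\|_2 \sim h_s^{-2} h_t^{-1}$. The stated bound follows with $c_5, c_6$ expressible in terms of the earlier constants. The only place where care is needed is the uniform-refinement assumption, which guarantees that the Jacobians $|J_k|$ and the reference mass matrix spectra give constants independent of the refinement level; I do not expect any genuine obstacle, as every step is essentially a textbook computation.
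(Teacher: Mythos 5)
The paper does not spell out a proof of this lemma---it simply declares it a standard finite element result that can also be obtained via \Cref{lem:blocksparse}---and your scaling argument (patchwise block-diagonal mass matrices, reference-element Jacobians $\sim h_s^2$ and $h_t$, spectral equivalence of the reference mass matrices, and the Kronecker structure for the space-time case) is exactly the standard route the paper is alluding to, with \Cref{lem:blocksparse} entering only in the trivial block-diagonal case. Your proposal is correct; the only point to keep in mind is that for curved patches $M_k = |J_k|\hat M_k$ holds as a spectral equivalence (Jacobian bounded above and below by $\sim h_s^2$) rather than an identity, which does not affect the conclusion.
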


\begin{lemma}\label{lem:Moments}
The moment matrices in \eqref{def:Mmat} satisfy the estimate
\begin{equation*}
  \Norm{M_\ell}_2 \lesssim \left( h_t p\log p  \, 2^{\ell} \right)^{\half} 
\end{equation*}
\end{lemma}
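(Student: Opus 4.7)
The plan is to estimate $\Norm{M_\ell}_2$ by expressing the action of $M_\ell$ as a sequence of $L_2(I^\ell_0)$ pairings. Given a coefficient vector $v=[v_{ij}]$ indexed over $i\in\Ic^\ell_0$ and $1\le j\le D_t$, introduce the associated temporal function
\[
  g(t)=\sum_{i,j} v_{ij}\,\chi_{ij}(t), \qquad t\in I^\ell_0,
\]
so that by \eqref{def:Mmat} we have $(M_\ell v)_\beta = (L_\beta,g)_{L_2(I^\ell_0)}$ for every Chebyshev node index $\beta$.

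First I would apply Cauchy--Schwarz componentwise and sum over $\beta$, giving
\[
  \Norm{M_\ell v}_2^2 \le \Norm{g}_{L_2(I^\ell_0)}^2 \sum_\beta \Norm{L_\beta}_{L_2(I^\ell_0)}^2.
\]
The first factor is controlled by \Cref{lem:massmatrix}: since $g$ is supported in $I^\ell_0$, $\Norm{g}_{L_2(I^\ell_0)}^2\lesssim h_t\,\Norm{v}_2^2$.

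The main obstacle is the bound on $\sum_\beta \Norm{L_\beta}_{L_2(I^\ell_0)}^2$. I would handle this by a linear change of variables to the reference interval $[-1,1]$, which contributes a scale factor $|I^\ell_0|/2\sim 2^\ell h_t$, followed by the classical estimate on the Chebyshev Lebesgue constant, $\Lambda_p\lesssim \log p$. Since each $|\tilde L_\beta|$ is pointwise majorized by the Lebesgue function $\sum_\alpha |\tilde L_\alpha|$, this gives $\Norm{\tilde L_\beta}_{L_\infty([-1,1])}\le \Lambda_p$, and exploiting the localization of each $\tilde L_\beta$ near its interpolation node produces $\Norm{\tilde L_\beta}_{L_2(-1,1)}^2\lesssim \log p$. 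Summing the $p$ contributions and rescaling to $I^\ell_0$ yields
\[
  \sum_\beta \Norm{L_\beta}_{L_2(I^\ell_0)}^2 \lesssim p\log p \cdot 2^\ell h_t.
\]

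Combining the two factors produces the claimed bound after taking the square root. A sharper version, should it be needed elsewhere in the complexity analysis, follows from the Marcinkiewicz--Zygmund inequality for first-kind Chebyshev nodes, which yields $\sum_\beta \Norm{\tilde L_\beta}_{L_2(-1,1)}^2\lesssim 1$ and thereby removes the $p\log p$ factor entirely; the weaker version given in the statement is however sufficient for the overall estimates.
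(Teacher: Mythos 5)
Your argument is correct and is essentially the paper's proof in transposed form: both rest on Cauchy--Schwarz in $L_2(I^\ell_0)$, the norm equivalence of \Cref{lem:massmatrix} for the $\chi_{ij}$-factor, rescaling to $[-1,1]$, and the Chebyshev Lebesgue-constant bound $\Lambda_p \lesssim \log p$ (the paper bounds the bilinear form ${\bf f}^T M_\ell {\bf q}$, you bound $\Norm{M_\ell v}_2$ row by row). The only loosely justified step, $\Norm{\widetilde L_\beta}_{L_2(-1,1)}^2 \lesssim \log p$ via ``localization,'' is not actually needed, since $\sum_\beta \widetilde L_\beta(x)^2 \le \bigl(\sum_\beta \abs{\widetilde L_\beta(x)}\bigr)^2 \le \Lambda_p^2$ already gives $\sum_\beta \Norm{\widetilde L_\beta}_{L_2(-1,1)}^2 \lesssim (\log p)^2 \le p\log p$, which suffices and, like the paper's own computation, in fact yields the slightly stronger bound $h_t\left(2^\ell p\log p\right)^\half$.
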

\begin{proof}
  For ${\bf q}\in \R^{N_{\ell,t}\times D_t}$ define the function
  $q := \sum_{i j} \chi_{ij} q_{i j}$, and for ${\bf f}\in \R^{p}$
  define the function $f := \sum_{\beta} L_\beta f_\beta$, then
\begin{equation*}
{\bf f}^T M_{\ell}{\bf q} = \int_{I^\ell_0} f(t) q(t) \,dt
\end{equation*}
From the Cauchy Schwarz inequality, it follows that
\begin{equation*}
  \abs{{\bf f}^T M_{\ell}{\bf q}} \leq
  \left( \int_{I^\ell_0} f^2(t) \,dt \right)^{\half}
  \left( \int_{I^\ell_0} q^2(t) \,dt \right)^{\half} .
\end{equation*}
The $q$-integral can be estimated using \Cref{lem:massmatrix}. The $f$-integral is transformed by a change of
variables to the standard interval $[-1,1]$. Then the resulting
integral can be estimated using the well-known fact that the Chebyshev
interpolation operator $\Pi_p$ satisfies the bound
$\Norm{\Pi_p}_\infty \lesssim \log p$, see, e.g. \cite{kress14},
Theorem 11.4. Thus
\begin{align*}
  \int_{I^\ell_0} f^2(t) \,dt &=
  \frac{h_t}{2} 2^\ell \int_{-1}^1 \widetilde f^2(t') \,dt' \leq
  h_t 2^\ell \Norm{\widetilde f}_{L_\infty[-1,1]}^2\\
 &\lesssim
 h_t 2^\ell \log p \Norm{\bf f}_\infty^2 \lesssim
 h_t  2^\ell p \log p \Norm{\bf f}_2^2 ,
\end{align*}
where $\widetilde f$ is the interpolate for the transplanted abscissas and
nodal values. Combining the estimates for $f$ and $q$ gives
the assertion.
\end{proof}

\subsection*{Temporal Far-field} We now turn to estimate the effect of the ACA
compression of the far-field matrices. To that end, assume that the ACA in
\eqref{aca:farfield} is performed until a certain level dependent accuracy
$\epsilon_\ell$ has been achieved, that is,
\begin{equation}
\Norm{E_d^\ell(\nu,\nu')}_2 \leq \epsilon_\ell. 
\end{equation}
To estimate the combined error of replacing all blocks in $A_d^\ell$ by the ACA approximation we use \Cref{lem:blocksparse}. Here 
$\gamma := \gamma_C = \gamma_R$ is the maximal number of neighbors of a cube
in level $\ell$,
\begin{equation}\label{est:nnbrs}
\gamma = \max_{\nu} \# \Nbrs(\nu).
\end{equation}
Thus it follows that
\begin{equation}
   \Norm{A_d^\ell - \hat A_d^\ell}_2 \leq \gamma \epsilon_\ell. 
\end{equation}
To estimate the resulting error of the ${\bf c}_d^\ell$ block, use
\eqref{c:m:a}. From \Cref{lem:Moments} and the fact that
$\Norm{A\otimes B}_2=\Norm{A}_2 \Norm{B}_2$ we get that
\begin{equation}\label{acaerr:far}
\Norm{{\bf c}_d^\ell - \hat {\bf c}_d^\ell}_2 \lesssim
  p\log p 2^{\ell}  h_t \epsilon_\ell.
\end{equation}

\subsection*{Temporal Near-field} 
Suppose now that the error of the ACA compression of a near-field block is
$\epsilon_{near}$, that is, $A_d(\nu,\nu') = \hat A_d(\nu,\nu') +
E_d(\nu,\nu')$, where $\Norm{E_d(\nu,\nu')}_2\leq \epsilon_{near}$. In
this case, we can conclude in a similar way as before that
\begin{equation}\label{acaerr:near}
  \Norm{{\bf c}^0_d - \hat {\bf c}_d^0}_2 \lesssim n_T \gamma \epsilon_{near},
\quad d \in \{0,1\}.
\end{equation}

\subsection*{Optimal Choice of the ACA parameter} From the 
triangle inequality and estimates \eqref{acaerr:far} and
\eqref{acaerr:near} one can estimate
\begin{equation}\label{preVhatV}
  \Norm{{\bf V} - \hat {\bf V}}_2 \lesssim \gamma \left( 
  n_T \epsilon_{near} +  p \log p 
  \;h_t \sum_{\ell=0}^{L-2} \epsilon_\ell 2^\ell 
  \right).
\end{equation}
Since $\epsilon_{near}$ and $\epsilon_\ell$ are at our disposition, we
choose a global accuracy $\epsilon$ and set
\begin{equation}\label{choose:eps}
\begin{aligned}
\epsilon_{near} &= \frac{\epsilon}{\gamma n_T } h_s h_t^{\half},\\
\epsilon_\ell &= \frac{\epsilon 2^{-\ell}}
                {\gamma p \log p \,(\ell+1)^2} h_s h_t^{-\half}.
\end{aligned}
\end{equation}
Then it follows from \eqref{preVhatV} that
\begin{equation*}
  \Norm{{\bf V} - \hat {\bf V}}_2 \lesssim  \epsilon h_s h_t^{\half}.
\end{equation*}
In the error analysis of integral equations estimates in
the energy norm are more important. In the case of the thermal 
single layer operator this is the $H^{-\half,-\quart}(\Gamma\times I)$
norm, see \cite{costabel90}. For functions in the discrete space $S$
this norm can bounded in terms of the $L_2$-norm using the inverse estimate
\begin{equation*}
\Norm{q}_{H^{-\half,-\quart}(\Gamma\times I)} \lesssim 
\left(h_s^{-\half} + h_t^{-\quart} \right)
\Norm{q}_{L_2(\Gamma\times I)} \lesssim 
h_s^{-1} h_t^{-\half} \left(h_s^{-\half} + h_t^{-\quart} \right) 
\Norm{\bf q}_2 
\end{equation*}
where the second step follows from \Cref{lem:massmatrix}. The last 
estimate immediately implies the main result of this section.
\begin{theorem}\label{theorem:acaerr}
  For $h_t\sim h_s^2$ and the ACA tolerances given in \eqref{choose:eps}, 
  the estimate
  \begin{equation*}
    \left \langle p, (\mathcal{V} - \mathcal{\hat V} )q \right\rangle
  = {\bf p}^T \left( {\bf V} - \hat {\bf V} \right) {\bf q}
    \lesssim \epsilon h_s^{-2} 
    \Norm{p}_{H^{-\half,-\quart}(\Gamma\times I)} 
    \Norm{q}_{H^{-\half,-\quart}(\Gamma\times I)}
  \end{equation*}
  holds for all $p,q \in S$. 
\end{theorem}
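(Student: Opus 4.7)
The statement is a straightforward assembly of three ingredients that the paper has already put in place. First, I would apply Cauchy--Schwarz in the Euclidean 2-norm to the discrete pairing:
\[
\bigl| \mathbf{p}^T (\mathbf{V} - \hat{\mathbf{V}}) \mathbf{q} \bigr| \;\leq\; \|\mathbf{V} - \hat{\mathbf{V}}\|_2 \, \|\mathbf{p}\|_2 \, \|\mathbf{q}\|_2.
\]
The spectral norm on the right is controlled by the bound $\|\mathbf{V} - \hat{\mathbf{V}}\|_2 \lesssim \epsilon\, h_s h_t^{\half}$ displayed immediately before the theorem, which itself was obtained from \eqref{preVhatV} via the triangle inequality together with the tolerance choices \eqref{choose:eps} for $\epsilon_{near}$ and $\epsilon_\ell$.

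Next, I would convert the coefficient-vector norms $\|\mathbf{p}\|_2$ and $\|\mathbf{q}\|_2$ back into the continuous anisotropic Sobolev norms. This is exactly the role of the chain of inverse estimates displayed just before the theorem: \Cref{lem:massmatrix} relates $\|\mathbf{q}\|_2$ to $\|q\|_{L_2(\Gamma\times I)}$ up to the factor $h_s^{-1} h_t^{-\half}$, and the inverse estimate on the discrete space $S$ then relates $\|q\|_{L_2}$ to $\|q\|_{H^{-\half,-\quart}(\Gamma\times I)}$ up to the factor $h_s^{-\half} + h_t^{-\quart}$. The same two-step conversion is performed symmetrically for $\mathbf{p}$ and $p$.

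Finally, I would impose the parabolic scaling $h_t \sim h_s^2$. Under this scaling, the prefactor $h_s h_t^{\half}$ from the spectral-norm bound collapses to a pure power of $h_s$, and the two copies of the inverse-estimate factor $h_s^{-1} h_t^{-\half}(h_s^{-\half} + h_t^{-\quart})$ (one for $p$, one for $q$) combine with it to produce the single exponent $h_s^{-2}$ displayed in the conclusion. The $\epsilon$ factor is simply carried along unchanged from the spectral norm estimate.

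The main obstacle is bookkeeping rather than any new analytic idea: three separate estimates each contribute their own mixed powers of $h_s$ and $h_t$, and the clean exponent $h_s^{-2}$ only emerges after the uniform substitution $h_t \sim h_s^2$. Particular care is needed to apply the inverse estimate in the direction that actually majorizes $\|\mathbf{p}\|_2$ and $\|\mathbf{q}\|_2$ by the weaker $H^{-\half,-\quart}$ norms (rather than the trivial embedding direction), and to ensure that the hidden constants in each $\lesssim$ are level-independent, which is guaranteed by the level-independent choices of interpolation order $p$, neighbor count $\gamma$, and the separation parameter $\eta_0$ made earlier in the paper.
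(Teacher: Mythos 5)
Your route is the same one the paper follows: Cauchy--Schwarz in the Euclidean norm, the spectral bound $\Norm{{\bf V}-\hat{\bf V}}_2\lesssim \epsilon h_s h_t^{\half}$ coming from \eqref{preVhatV} and \eqref{choose:eps}, the coefficient-to-function conversion of \Cref{lem:massmatrix}, an inverse estimate, and finally the scaling $h_t\sim h_s^2$. The problem is that the final bookkeeping, which you describe as the only real work, does not produce the claimed exponent. Each of the two conversions you invoke reads
\[
\Norm{\bf q}_2 \;\lesssim\; h_s^{-1}h_t^{-\half}\Norm{q}_{L_2(\Gamma\times I)}
\;\lesssim\; h_s^{-1}h_t^{-\half}\left(h_s^{-\half}+h_t^{-\quart}\right)\Norm{q}_{H^{-\half,-\quart}(\Gamma\times I)},
\]
i.e.\ a factor of order $h_s^{-5/2}$ once $h_t\sim h_s^2$. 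Multiplying the spectral bound by two such factors gives
\[
\epsilon\, h_s h_t^{\half}\left[h_s^{-1}h_t^{-\half}\left(h_s^{-\half}+h_t^{-\quart}\right)\right]^2 \;\sim\; \epsilon\, h_s^{-3},
\]
not $\epsilon\, h_s^{-2}$. The exponent $-2$ is exactly what one obtains if one stops after the mass-matrix step, namely $\abs{{\bf p}^T({\bf V}-\hat{\bf V}){\bf q}}\lesssim \epsilon h_s h_t^{\half}\, h_s^{-2}h_t^{-1}\Norm{p}_{L_2}\Norm{q}_{L_2}\sim\epsilon h_s^{-2}\Norm{p}_{L_2}\Norm{q}_{L_2}$; but a bound with the weaker $H^{-\half,-\quart}$ norms on the right is a strictly stronger statement and, via the genuine inverse inequality in the direction you correctly single out (majorizing $\Norm{q}_{L_2}$ by $\Norm{q}_{H^{-\half,-\quart}}$ for $q\in S$), costs the additional factor $(h_s^{-\half}+h_t^{-\quart})^2\sim h_s^{-1}$ that your count silently drops.

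So the gap is concrete: as executed, your argument yields either $\epsilon h_s^{-2}$ with $L_2$ norms or $\epsilon h_s^{-3}$ with the energy norms, and neither coincides with the theorem as stated. Be aware that the paper itself is loose at precisely this point: the display preceding the theorem bounds $\Norm{q}_{H^{-\half,-\quart}}$ by a multiple of $\Norm{\bf q}_2$, which is the trivial (and for this purpose unusable) direction, and the claim that it ``immediately implies'' the theorem hides the same missing factor. You are more careful than the paper in identifying which direction of the inverse estimate is actually needed, but you then assert a cancellation that does not occur; to close the argument you must either keep $L_2$ norms on the right, accept $h_s^{-3}$ in the energy-norm version, or explain where the extra power of $h_s$ is absorbed (e.g.\ by tightening the tolerance choice \eqref{choose:eps}).
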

The effect of the error of the bilinear form on the solution can be 
estimated in a standard way using the Strang Lemma, see, e.g. \cite{braess07}, we thus obtain
\begin{equation*}
\Norm{q-q_h}_{H^{-\half,\quart}(\Gamma\times I)} \lesssim
\Norm{q-q_h}_{H^{-\half,\quart}(\Gamma\times I)} +
\epsilon h_s^{-2} \Norm{q}_{H^{-\half,\quart}(\Gamma\times I)} ,
\end{equation*}
where the first term is the discretization error and the second term is the
error of the fast method.

\subsection*{Complexity Estimates}
We use two complexity measures: first, the number of coefficients to store the
matrices $A_d^\ell$, $A_d$, and second, the number of floating point
operations to multiply these matrices with all vectors that are
necessary to evaluate \Cref{algo:bForwardElim} using the
sequence \eqref{fast:time:algor}.

For a matrix $A\in \R^{m\times n}$ that is stored in rank-$r$ format,
i.e., $A = U V^T$ with $U\in \R^{m\times r}$ and $V\in \R^{n\times r}$,
the asymptotic estimate of storage and matrix vector product
are $r(n+m)$. A well-known fact about compressing matrices in Galerkin 
methods with asymptotically smooth kernels is that the rank grows only
logarithmically with the accuracy $\epsilon$ see,
\cite{bebendorf08}. Since the heat kernel is smooth, the same result
applies, see \cite{watschinger22}. Denoting the maximal rank by
$r_{aca}$ then it follows from \eqref{choose:eps} that $r_{aca}$ grows
only logarithmically with the mesh sizes. 

Since only coefficients to neighboring blocks are stored 
we obtain in view of
\eqref{est:nnbrs},\eqref{est:patches} and \eqref{est:cubes}, 
for the storage of all $A_d^\ell$'s that
\begin{equation}\label{storage:far}
S_{far} \lesssim r_{aca} \sum_{\ell=0}^{L-2} \sum_{d=2}^3 \!\! \sum_{\nu \in \Cc_\ell \atop\nu'\in\Nbrs(\nu)}
\!\! \left( N_s(\nu) + N_s(\nu') \right)
\lesssim \gamma r_{aca} L N_s\,.
\end{equation}
To estimate the cost of a matrix vector product, one has to keep in
mind that the convolution structure of the matrix $\mathbf{V}$ implies
that the number of $\mathbf{c}^\ell_2$ and $\mathbf{c}^\ell_3$-blocks
is order $2^{L-\ell}$. Thus, a very similar calculation as before gives 
an estimate for the computational cost of the far-field
\begin{equation}\label{flops:far}
N_{far} \lesssim r_{aca} \sum_{\ell=0}^{L-2} \sum_{d=2}^3 2^{L-\ell} \!\! \sum_{\nu \in \Cc_\ell \atop\nu'\in\Nbrs(\nu)}
\!\!\left( N_s(\nu) + N_s(\nu') \right)
\lesssim \gamma r_{aca} L 2^L N_s = \gamma r_{aca} L N_t N_s.
\end{equation}

In the evaluation of the near-field, all cubes are in the finest spatial
level. Hence, the number of cubes is bounded by $N_s$, and the number of
patches per cube is bounded by a constant. Thus, even if no ACA
compression is used, the storage and the direct evaluation of the
near-field matrices scales like $\gamma N_s$ and $\gamma N_s N_t$,
respectively. However, the use of the ACA compression can significantly
reduce the constant factor in the asymptotic cost estimate.

\section{Numerical Results}\label{sec:numresults}
 We perform a numerical experiment on a benchmark problem to demonstrate the effectiveness of the hybrid ACA compression. We have implemented all the algorithms in \texttt{C} and performed the experiments on an Intel Xeon Gold $2.5$GHz processor.  We first provide some more details about the aspects of the overall solution scheme that have not been discussed so far.

\subsection*{Linear solver} Every time step in the forward elimination 
\eqref{forward:elimination} involves solving the system $A_0 q_i =  b_i$.
Since the matrix $A_0$ has an exponentially decaying kernel it is stored in 
sparse format. Further, $A_0$ is symmetric positive
definite and has bounded condition number when $h_t \sim h_s^2$, see 
\cite{messner-etal15}. Thus, we simply use unpreconditioned conjugate gradients (CG) to solve the linear system. 

\subsection*{Quadrature for matrix coefficents}
The coefficients of the matrices $A_d$, and $A_d^ \ell$ involve surface integrals over pairs of triangular patches. These integrals are mapped on four dimensional cubes and calculated using tensor product Gauss Legendre rules. In the case $d=0$ and $d=1$, the kernel or its derivatives have singularities for adjacent or identical patches. Here well-known singularity removing transformations for elliptic Galerkin BEM can be used to ensure rapid convergence of the quadrature rules with respect to the quadrature order, see \cite{sauter-schwab11}. 

\subsection*{Choice of parameters} 
The main parameters that control the convergence of the hybrid ACA method are the separation ratio $\eta$ in \eqref{def-sepRatio}, the ACA tolerance $\epsilon$ in \eqref{choose:eps}, the order of the temporal Chebychev expansion $p$ in \eqref{interpol:G} and the quadrature order $p_Q$. The goeal is to adjust these parameters to $h_s$ and $h_t$ such that the convergence rate of the direct method is maintained by the accelerated scheme.  

The separation ratio controls how many neighbors are retained in the matrices $A_d^\ell$ and $A_d$ and thus the factor $\gamma$ in \eqref{est:nnbrs}. The scaling of the time and spatial clustering implies that $\eta$ can be the same in all spatial levels. However, when refining the mesh $\eta$ must be decreased at an algebraic rate to decrease the spatial far-field truncation at an exponential rate. This implies that $\gamma$ increases algebraically, contributing a logarithmic factor in the overall complexity estimate.

\Cref{theorem:acaerr} determines how the ACA tolerance has to be chosen. Because of the factor $h_s^{-2}$ in the estimate, the parameter must be $\epsilon \sim h_s^{2+r}$ when the error of the direct scheme is $h_s^r$.
Thus, the ranks of the ACA approximations will increase as $h_s\to 0$ contributing a logarithmic term in the complexity. Likewise, the Chebyshev order $p$ must be increased, contributing another logarithm in the complexity.

Finally, adjusting the quadrature order to the desired convergence rate contributes to another logarithmic term to the time to set up all matrices. 

\subsection{Example Problem}
We consider $\Gamma$ as the surface of a unit sphere for the time interval $[0,1]$. We have created a right hand side for which the analytic solution is a second order polynominal in space and time.
The surface of the sphere is initially triangulated into $48$ triangular patches. Then refinements are created by subdividing each triangle into four congruent triangles, see \Cref{fig:sphere}. Here, $N_s$ and $N_t$ represent the number of triangular patches and time intervals, respectively. 

\begin{figure}[H]
    \centering
    \subfloat[]{\includegraphics[height=5cm, width=5.5cm]{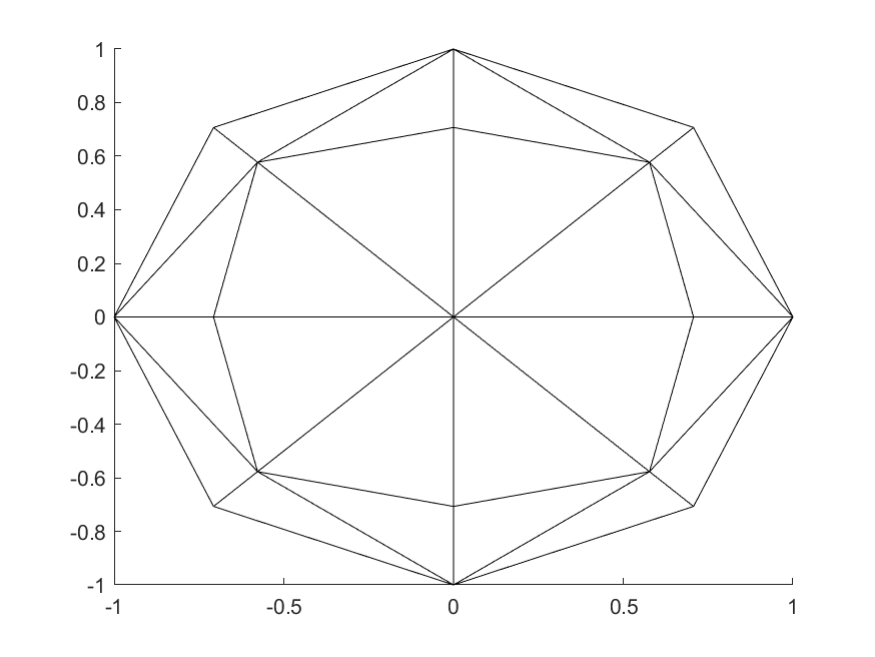}\label{sp0}} \quad
    \subfloat[]{\includegraphics[height=5cm, width=5.5cm]{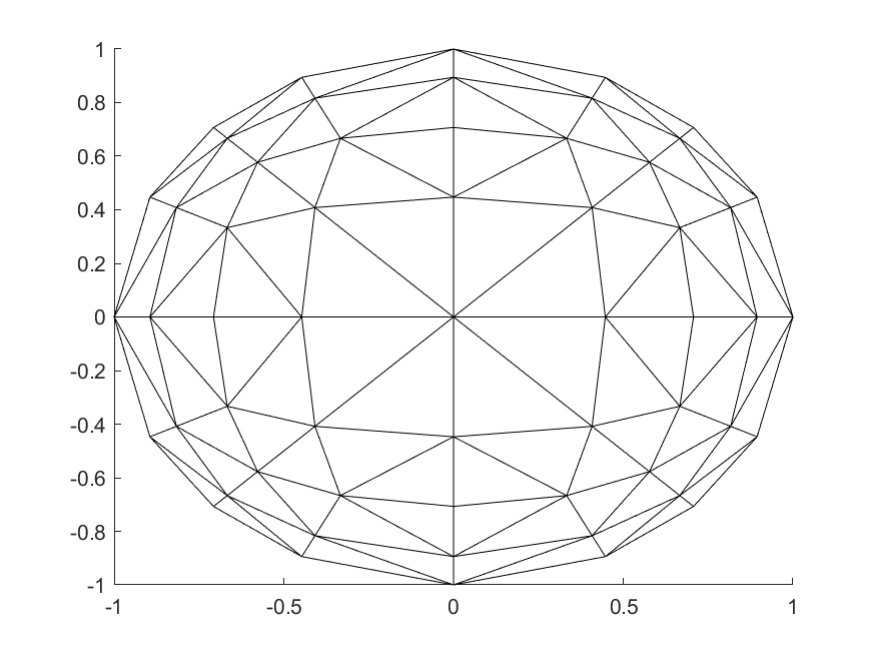}\label{sp1}}%
    \caption{Triangulation of the unit sphere and its refinements.}
    \label{fig:sphere}
\end{figure}

We perform two experiments with two different nodal shape functions. In the first experiment, we assume the nodal shape function in space and time as piecewise constant elements $(a=0)$.  In the second experiment, we assume piecewise constants elements in time and piecewise linear elements in space $(a=1)$. In the latter experiment, we use the extension and restriction method of \Cref{sec:contin} to obtain continuous elements.

In \Cref{tab:parameters_piecewise_constant} and \Cref{tab:parameters_piecewise_linear}, we tabulate all the parameters used in the experiments for $a=0$ and $a=1$, respectively. 
The approximation rate for the constant elements is $O(h_s)$, thus according to \Cref{theorem:acaerr}, the ACA tolerance parameter $\epsilon$ should scale like $O(h_s^3)$. In our experiments we reduce this parameter by a factor of ten with each refinement. This is slightly more than the theoretical factor of eight. For $a=1$, the approximation power is $O(h_s^2$), thus $\epsilon$ should be reduced by a factor of 16, which is what we did in our calculations. Note that we use only a second order quadrature rule for $a=0$ and both second and third order quadrature rules for $a=1$. The order of the temporal Chebychev expansion $p$ appears to have a lesser effect on the overall error. In all experiments, $p=4$ was sufficient, except in the highest accuracy case, where the value had to be increased (\Cref{tab:parameters_piecewise_linear}).

\begin{table}[H]
     \centering
     \resizebox{9cm}{!}{
    \begin{tabular}{|l|l|l|l|l|l|l|l|}
    \hline
     $\epsilon$ & $L_s$  & $L$ & $\eta$ & $p_Q$ & $p$ & $N_s$ &  $N_t$  \\ \hline
     2e-02  & 2 & 3 & 0.40  & 2 & 4& 48  & 40     \\ 
     2e-03  & 2 & 5 & 0.39  & 2 & 4& 192 & 160    \\ 
     2e-04  & 3 & 7 & 0.36  & 2 & 4& 768 & 640    \\ 
     2e-05  & 4 & 9 & 0.33  & 2 & 4& 3072 & 2560  \\ 
     2e-06  & 5 & 11& 0.30  & 2 & 4& 12288 & 10240 \\ \hline
    \end{tabular}}
    \caption{Parameters used in experiment for piecewise constant element $(a=0)$. The relation between the spatial level and the temporal level is given by \cref{level_relation}.}
    \label{tab:parameters_piecewise_constant}
\end{table}

\begin{table}[H]
     \centering
     \resizebox{9cm}{!}{
    \begin{tabular}{|l|l|l|l|l|l|l|l|}
    \hline
     $\epsilon$ & $L_s$  & $L$ & $\eta$ & $p_Q$ & $p$ & $N_v$ & $N_t$  \\ \hline
     4e-03     & 2 & 3 & 0.40  & 2, 3 &  4& 26   & 40   \\
     1.6e-04   & 2 & 5 & 0.35  & 2, 3 &  4& 98   & 160  \\ 
     1e-05     & 3 & 7 & 0.30  & 2, 3 &  4& 386  & 640 \\
     6.25e-07  & 4 & 9 & 0.25  & 2, 3 &  5& 1538 & 2560 \\ \hline
    \end{tabular}}
    \caption{Parameters used in experiment for piecewise linear element $(a=1)$. The relation between the spatial level and the temporal level is given by \cref{level_relation}.}
    \label{tab:parameters_piecewise_linear}
\end{table}

 \Cref{fig:setup} displays the setup time for all matrices $\hat A_d^\ell$ and $\hat A_d$ in the ACA representation of {\bf V}. This includes the quadrature time for the matrix coefficients as well as the time to calculate the matrices $U$ and $V$ in the low-rank representation. 
\begin{figure}[H]
    \centering
    \subfloat[$a=0$]{\includegraphics[height=4cm, width=6.25cm]{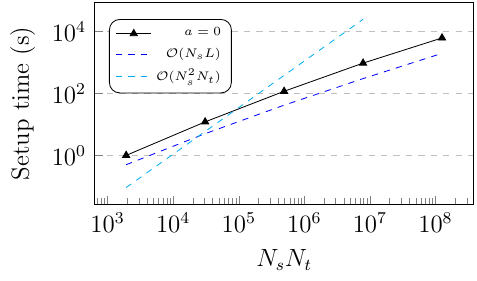}\label{setup0}} \quad
    \subfloat[$a=1$]{\includegraphics[height=4cm, width=6.25cm]{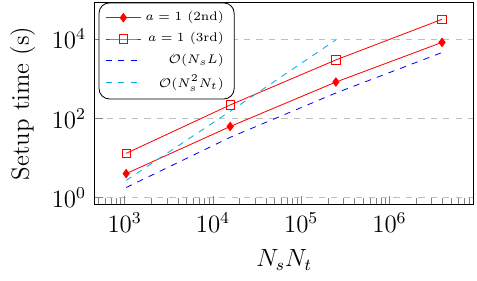}\label{setup1}}%
    \caption{Setup time (sec) versus the $N_s N_t$. The quadrature order is mentioned in the parenthesis for $a=1$.}
    \label{fig:setup}
\end{figure}

We plot the time to solve $\bf V \bf q = \bf p$ using \Cref{algo:bForwardElim} in \Cref{fig:solution}. 
\begin{figure}[H]
    \centering
    \subfloat[$a=0$]{\includegraphics[height=4cm, width=6.25cm]{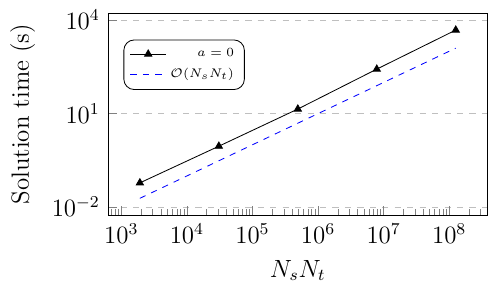}\label{solution0}} 
    \quad
    \subfloat[$a=1$]{\includegraphics[height=4cm, width=6.25cm]{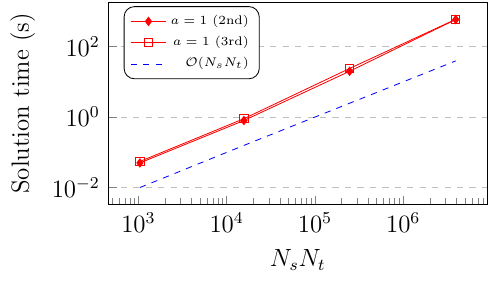}\label{solution1}}%
    \caption{Solution time (sec) versus the $N_s N_t$.}
    \label{fig:solution}
\end{figure}

Both setup and solution times show the theoretical complexity estimates of \Cref{sec:analysis} well. For comparison, we also include a curve for $O(N_s^2 N_t)$, which is the estimated complexity of setting up the uncompressed matrix 
\eqref{toeplitz:uncompressed} while exploiting the block Toeplitz structure.

Note that even though the setup time has a lower asymptotic cost, it dominates over the solution time for the meshes we were able to compute. Only for much finer meshes than shown here, the solution time will be longer than the setup time. This has to do with the high cost of evaluating the integrals for the matrix coefficients. We have shown data for second and third order quadrature rules to illustrate the effect of increasing the order.


\Cref{fig:error} displays the $L_2$-norm of the absolute error in the solution. 
For $a=0$, the same quadrature order $p_Q=2$ is sufficient to reproduce the optimal $O(h_s)$ behavior. In the case $a=1$, the finest mesh requires increasing the quadrature to $p_Q=3$ to preserve the optimal $O(h_s^2)$ error. It is worth noting that $a=1$ gives better approximations to the solution compared to $a=0$.

\begin{figure}[H]
    \centering
    \subfloat[$a=0$]{\includegraphics[height=4cm, width=6.25cm]{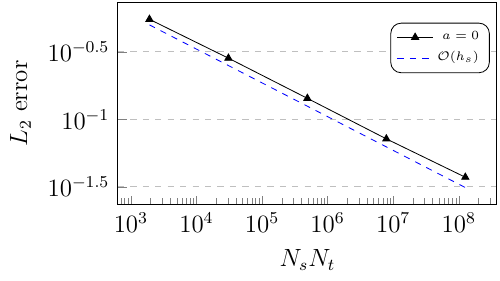}\label{error0}}%
    \quad
    \subfloat[$a=1$]{\includegraphics[height=4cm, width=6.25cm]{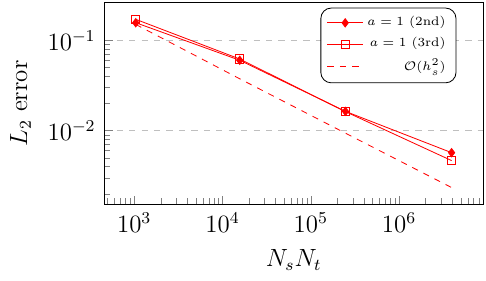}\label{error1}}%
    \caption{$L_2$-norm of the absolute error in the solution versus the $N_s N_t$.}
    \label{fig:error}
\end{figure}

Now, we illustrate the effectiveness of the ACA. We denote the ACA approximation of the temporal far-field block matrix (at the temporal level $l$) $A_d^\ell$ as $\hat A_d^\ell$. We compare the cumulative number of entries in the matrices $A_d^\ell$ with those in the matrices $\hat A_d^\ell$ across all the temporal levels. \Cref{fig:far} displays the number of entries of $A_d^\ell$ (no ACA) and those in $\hat A_d^\ell$ (with ACA). We denote the ACA approximation of the temporal near-field block matrix $A_d$ as $\hat A_d$ with $d \in \{0,1\}$. We compare the cumulative number of entries in the matrices $A_d$ with those in the matrices $\hat A_d$. \Cref{fig:near} presents the number of entries of $A_d$ (no ACA) and those in $\hat A_d$ (with ACA). Here, we only report the $2$nd order quadrature rule for $a=1$ because the plots for both the $2$nd and $3$rd order quadrature rules overlap. From \Cref{fig:far,fig:near}, it is clear that we get better compression as the value of $N_sN_t$ grows. Therefore, ACA-based compression becomes highly effective for large $N_s N_t$.

\begin{figure}[H]
    \centering
    \subfloat[$a=0$]{\includegraphics[height=4cm, width=6.25cm]{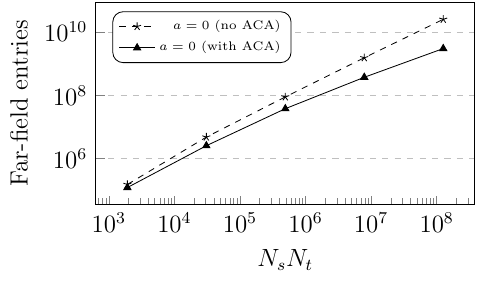}\label{far0}} \quad
    \subfloat[$a=1$]{\includegraphics[height=4cm, width=6.25cm]{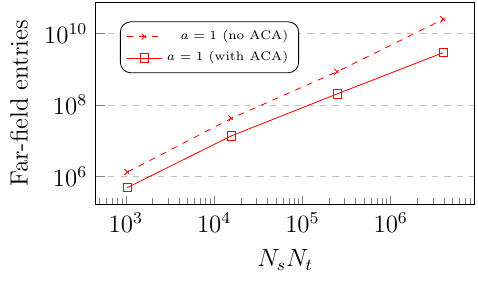}\label{far1}}%
    \caption{Number of far-field entries versus the $N_s N_t$.}
    \label{fig:far}
\end{figure}

\begin{figure}[H]
    \centering
    \subfloat[$a=0$]{\includegraphics[height=4cm, width=6.25cm]{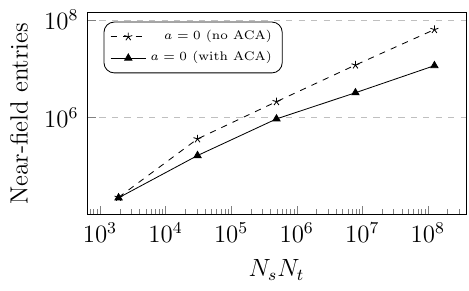}\label{near0}} \quad
    \subfloat[$a=1$]{\includegraphics[height=4cm, width=6.25cm]{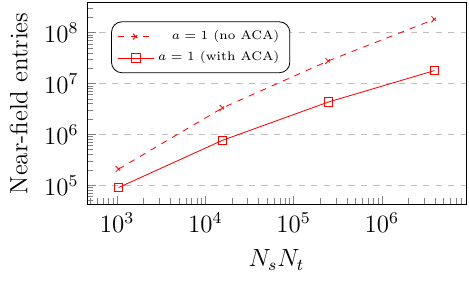}\label{near1}}%
    \caption{Number of near-field entries versus the $N_s N_t$.}
    \label{fig:near}
\end{figure}

\section{Conclusions}
We have presented a new ACA-based compression approach for discretizations of thermal potential operators. We have shown analytically and numerically how to control the various parameters. The resulting scheme has log-linear complexity and preserves the approximation power of the discretization space. The ACA compression method has the advantage that its formulation only involves linear algebra and thus does not depend on the specific form of the heat kernel. Thus, it should be easily adaptable to more complicated parabolic problems, such as transient Stokes flow. A preliminary version of this work was published in \cite{wang20}. This dissertation contains further numerical illustrations of more realistic heat transfer problems in complicated geometries.

\section*{Acknowledgments}
The authors acknowledge the use of the computing resources at HPCE, IIT Madras.


\end{document}